\numberwithin{equation}{section}
\def \NZ{\mathbb{N}_0}
\newcommand{\RL}{\mathbb{R}}
\newcommand{\Laplace}{\mathscr{L}}
\newcommand{\e}{\mathrm{e}}
\newcommand{\ve}{\bm{\mathrm{e}}} 
\renewcommand{\L}{\mathcal{L}} 
\newcommand{\Oh}{{\mathcal{O}}}
\newcommand{\Exp}{\mathbb{E}}
\newcommand{\Norm}{\mathcal{N}}
\newcommand{\LN}{\mathcal{LN}}
\newcommand{\SLN}{\mathcal{SLN}}
\newcommand\bfSigma{\bm{\Sigma}}
\newcommand{\bfH}{\bm{H}}
\newcommand{\bfI}{\bm{I}}
\newcommand{\bfS}{\bm{S}}
\newcommand{\bfX}{\bm{X}}
\newcommand{\bfx}{\bm{x}}
\newcommand{\bfy}{\bm{y}}
\newcommand{\bfz}{\bm{z}}
\newcommand\bfzero{\bm{0}}
\newcommand\bfone{\bm{1}}
\newcommand\bfmu{\bm{\mu}}
\newcommand{\diag}{\mathrm{diag}}
\newcommand{\Var}{\mathbb{V}\mathrm{ar}}
\renewcommand{\epsilon}{\varepsilon}
\renewcommand{\pi}{\uppi}
\newcommand{\tr}{\top}
\newcommand{\oset}[3][0ex]{
  \mathrel{\mathop{#3}\limits^{
    \vbox to#1{\kern-2\ex@
    \hbox{$\scriptstyle#2$}\vss}}}}
\newcommand{\iidDist}{\overset{\mathrm{i.i.d.}}{\sim}}
\newcommand{\stimes}{{\mkern-2mu\times\mkern-2mu}}
\newenvironment{abstract}
                 {\vspace{6pt}
                  \begin{center}
                  \begin{minipage}{315.83pt}
                  \small
                  \begin{center}
                  {\bf Abstract}
                  \end{center}
                  \noindent\ignorespaces
                 }
                 {\end{minipage}\end{center}}
\title{Norberg Festschrift}
\begin{document}

\chapter{Orthonormal polynomial expansions \\{ }and lognormal sum densities}
By S{\o}ren Asmussen, Pierre-Olivier Goffard, Patrick J.\ Laub

\begin{abstract}
Approximations for an unknown density $g$ in terms of a reference density $f_\nu$
and its associated orthonormal polynomials are discussed. The main application is
the approximation of the density $f$ of a sum $S$ of lognormals which may have different
variances or be dependent. In this setting, $g$ may be $f$ itself or a transformed
density, in particular that of $\log S$ or an exponentially tilted density.  Choices
of reference densities $f_\nu$ that are considered include normal, gamma and lognormal densities.
For the lognormal case, the orthonormal polynomials are found in closed form
and it is shown that they are not dense in $L_2(f_\nu)$, a result that is closely related
to the lognormal distribution not being determined by its moments and provides a warning to the most
obvious choice of taking $f_\nu$ as lognormal. Numerical examples are presented
and comparison are made to  established approaches such as the Fenton--Wilkinson method
and skew-normal approximations. Also extension to density estimation for statistical data sets and non-Gaussian copulas
are outlined.\\
\textit{Keywords:} Lognormal distribution, sums of lognormally distributed random variable, orthogonal polynomial,
density estimation, Stieltjes moment problem, numerical approximation of functions, exponential tilting,
conditional Monte Carlo, Archimedean copula, Gram--Charlier expansion, Hermite polynomial,
Laguerre polynomial
\end{abstract}



\section{Introduction}\label{S:Intr}

The lognormal distribution arises in a wide variety of disciplines such
as engineering, economics, insurance, finance, and
across the sciences \cite{AitchisonBrown1957,CrowShimizu1988,
JohnsonKotzBalakrishnan94,LimpertStahelAbbt2001,Dufresne2009}.
Therefore, it is natural that sums $S$ of $n$ lognormals come up in a
number of contexts. A basic example in finance is the
Black--Scholes
model, which assumes that security prices are 
lognormals, and hence the value of a
portfolio with $n$  securities has the form $S$.
In  insurance, individual claim sizes are often taken as independent
lognormals, so the total claim amount in a certain period is
again of form \cite{ThorinWikstad1977}.
A further example occurs
in telecommunications, where
the inverse of the signal-to-noise ratio (a measure of performance in
wireless systems)
can be modeled as a sum of i.i.d.\ lognormals \citep{Gubner2006}.

The distribution $S$ is, however, not available
in explicit form,
and evaluating it numerically or approximating it is considered to be a challenging problem
with a long history.
The classical approach is to use an approximation
with another lognormal distribution.  This goes back at least to
\cite{Fenton1960} and it is nowadays
known as the \emph{Fenton--Wilkinson method} as
according to \cite{Marlow1967} this approximation was already
used by Wilkinson in 1934.
However, it can be rather inaccurate when the number of summands is rather small, or
when the dispersion
parameter is too high.
Also tail approximations have been extensively discussed, with the
right tail being a classical example in subexponential theory,\cite{EKM97}, and the
study of the left tail being more recent, \cite{AsJeRo14b}, \cite{GuTa13}.

This paper discusses a different method, to approximate the probability density function (p.d.f.) $f$ via polynomials $\{Q_k\}$ which are orthonormal w.r.t.\ some reference
measure $\nu$. In the general formulation, one is interested in approximating a target density $g$ using the density $f_\nu$ of $\nu$
as reference and $g$ some other density. One then finds
a series representation of $g/f_\nu$ of the form $\sum_{k=0}^\infty a_k Q_k$,
and then the approximation of $g$ is
\begin{equation} \label{orthog_approx}
	\widehat{g}(x)\ =\ g_\nu(x) \sum_{k=0}^K a_k Q_k(x),
\end{equation}
for some suitable $K$.
The most obvious connection to the lognormal sum problem is $g=f$, but we shall look
also at other possibilities, to take $g$ as the density of $\log S$ and transform back to get the approximation $\widehat{f}(x)=$
$\widehat{g}(\log x)/x$ or to use an exponential tilting.
The choice of $\nu$ is a crucial step, and
three candidates for $\nu$ are investigated: the normal, the gamma, and the lognormal distributions.

The form of the $Q_k$ is classical for the normal distribution where it is the Hermite polynomials and for the gamma where  it is the Laguerre polynomials,
but for the lognormal distributions it does not appear to be in the literature and we give here  the functional expression
(Theorem~\ref{LogNormalOrthogonalPolynomialProposition}). The Fenton--Wilkinson method may be seen
as the $K=2$ case of $f_\nu$ being lognormal of the general scheme, and this choice of $f_\nu$  may be the most obvious one.
However, we show that in the lognormal case the orthonormal polynomials are not dense in $L_2(f_\nu)$. This result is closely related
to the lognormal distribution not being determined by its moments
\cite{Heyde63,Berg1984} and indicates that a lognormal $f_\nu$ is potentially dangerous.
For this reason, the rest of the paper concentrates on  taking the reference distribution as normal (using the logarithmic transformation)
or gamma  (using exponential tilting).

After discussing the details of the orthonormal polynomials expansions in Sections~\ref{S:OrthPol} and~\ref{S:LN_Appl},
we proceed in Section~\ref{S:numerical} to show a number of numerical examples. The polynomial expansions
are compared to existing methods as Fenton--Wilkinson and a more recent approximation in terms of log skew normal
distributions \cite{hcine2015highly}, as well as to exact values obtained by numerical quadrature in cases where this
is possible or by Monte Carlo density estimation. Section~\ref{S:numerical} also outlines an extension to  statistical data sets and non-Gaussian copulas. Appendices A.1 contains a technical proof
and Appendix A.2 some new material on the $\SLN$ Laplace transform.

\section{Orthogonal polynomial representation of probability density functions}
\label{S:OrthPol}

Let $X$ be a random variable which has a density $f$
with respect to some measure $\lambda\ge 0$
(typically Lebesgue measure on an interval or counting measure
on a subset of $\mathbb{Z}$).  If $f$ is unknown but the distribution of $X$ is
expected to be close to some probability measure $\nu$ with p.d.f.\ $f_{\nu}$,
one may use $f_{\nu}$ as a first approximation to $f$ and next try
to improve by invoking suitable correction terms.

In the setting of this paper $X$ is the sum of lognormal r.v.s and the correction terms are obtained by expansions in terms of orthonormal polynomials.
Before going into the details of the lognormal example, let us consider the general case.

Assuming all moments of $\nu$ to be finite,
the standard Gram--Schmidt orthogonalization technique shows the existence
of a set of polynomials $\{Q_k\}_{k\in\NZ}$ which are orthonormal in
$\L^{2}(\nu)$ equipped with the usual inner product
$\left<g,h\right>=\int gh \dif \nu$ and the corresponding norm
$\|g\|^2=\left<g,g\right>$. That is,
the $Q_k$ satisfy
\begin{equation}\label{OrthogonalityCondition}
\left<Q_i,Q_j\right>=\int Q_i(x)Q_j(x) \dif \nu(x)=\delta_{ij} \,, \quad i, j \in\NZ,
\end{equation}
where $\delta_{ij}$ denotes the Kronecker symbol.
If there exists an $\alpha>0$ such that
\begin{equation}\label{DefinedMGFCondition}
\int \e^{\alpha|x|} \dif \nu(x) < \infty\,,
\end{equation}
the set  $\{Q_k\}_{k\in\NZ}$ is complete in $\L^{2}(\nu)$, cf.\ Chapter $7$ of the book by Nagy \cite{Na65}. The implication is that if $f/f_\nu$ is in $\L^{2}(\nu)$, that is, if
\begin{equation}\label{L2Condition1}
\int \frac{f(x)^2}{f_\nu(x)^2}\dif \nu(x)\ =\ \int \frac{f(x)^2}{f_\nu(x)}\dif \lambda(x)\ <\ \infty\,,
\end{equation}
we may expand $f/f_\nu$ as $\sum_{k=0}^\infty a_k Q_k$ where
\begin{align}\label{28.10a}a_k\ &=
\left<f/f_\nu,Q_k\right>\ =\ \int f Q_k\,\dif\lambda\ =\ \Exp\left[Q_k(X)\right]\,.
\end{align}
This suggests that we use \eqref{orthog_approx} as an approximation of $f$ in situations where the p.d.f.\ of $X$ is unknown but the moments are accessible.

\begin{remark}\label{Rem:27.10a}
If the first $m$ moments of $X$ and $\nu$ coincide, one has $a_k=0$ for $k=1$, \ldots, $m$.
When choosing $\nu$, a possible guideline is therefore to match as many moments as possible.
\hfill $\Diamond$
\end{remark}

Due to the Parseval relationship $\sum_{k=0}^\infty a_k^2=\|f/f_\nu\|^2$, the coefficients of the polynomial expansion, $\{a_k\}_{k\in\NZ}$, tend toward $0$ as $k \to \infty$. The accuracy of the approximation \eqref{orthog_approx}, for a given order of truncation $K$, depends upon how swiftly the coefficients decay; note that the $\L^{2}$ loss of the approximation of $f/f_\nu$ is $\sum_{K+1}^{\infty}a_k^2$.
Note also that the orthogonal polynomials can be specified recursively
(see Thm.~3.2.1 of \cite{Sz39}) which allows a reduction of the computing time required for the coefficients' evaluation
and makes it feasible to consider rather large $K$.

\subsection{Normal reference distribution}\label{SS:NormalNu}
A common choice as a reference distribution is the normal $\mathcal{N}(\mu,\sigma^{2})$. The associated orthonormal polynomial are given by
\begin{equation}\label{eq:NormalDistributionOrthogonalPolynomial}
Q_{k}(x)=\frac{1}{k!2^{k/2}}H_{k}\left(\frac{x-\mu}{\sigma\sqrt{2}}\right),
\end{equation}
where $\left\{H_{k}\right\}_{k\in\NZ}$ are the Hermite polynomials, defined in \cite{Sz39} for instance.
If $f$ is continuous, a sufficient (and close to necessary) condition for $f/f_\nu\in\L^2(\nu)$ is
\begin{equation}\label{SA7.11a}
f(x)\ =\ \Oh\big(\e^{-ax^2}\big)\quad\text{as }x\to\pm\infty\quad\text{with } a>\big(4\sigma^2\big)^{-1}\,.
\end{equation}
Indeed, we can  write the integral in \eqref{L2Condition1} as $I_1+I_2+I_3$, the integrals over
$(-\infty,-A)$, $[-A,A]$, resp.\ $(A,\infty)$. Note that $I_2<\infty$  follows since the integrand
is finite by continuity, whereas the finiteness of $I_1,I_3$ is ensured by the integrands
being $\Oh(\e^{-bx^2})$ where $b=2a-1/2\sigma^2>0$. Similar arguments apply to
conditions \eqref{SA7.11b} and \eqref{SA7.11c} below.
\begin{remark}
The expansion formed by a standard normal baseline distribution and Hermite polynomials is known in the literature as Gram--Charlier expansion of type A, and the application to a standardised sum is the Edgeworth expansion, cf.\ \cite{Cr99}, \cite{Barndorff1989asymptotic}. \hfill $\Diamond$
\end{remark}
%

\subsection{Gamma reference distribution}\label{SS:GammaNu}
If $X$ has support $(0,\infty)$, it is natural to look for a $\nu$ with the same property.
One of the most apparent possibilities is the
gamma distribution, denoted Gamma$(r,m)$ where $r$ is the shape parameter
and $m$ the scale parameter.
The p.d.f.\ is
\begin{equation} \label{GammaPDF}
f_{\nu}(x)=\frac{x^{r-1} \e^{-x/m}}{m^{r}\Gamma(r)},\hspace{0.2cm}x\in \RL_+ \,.
\end{equation}
The associated polynomials are given by
\begin{equation} \label{OrthogonalPolynomialGammaDistribution}
Q_{n}(x)=(-1)^{n}\left[\frac{\Gamma(n+r)}{\Gamma(n+1)\Gamma(r)}\right]^{-1/2}L_{n}^{r-1}(x/m),\hspace{0.2cm} n\in\NZ,
\end{equation}
where $\{L^{r-1}_{n}\}_{n\in\NZ}$ denote the \emph{generalised Laguerre polynomials}, see \cite{Sz39}; in Mathematica these are accessible via the \texttt{LaguerreL} function.
Similarly to \eqref{SA7.11a}, one has the following condition for $f/f_\nu\in\L^2(\nu)$:
\begin{gather}\begin{split}\label{SA7.11b}
f(x)\ &=\ \Oh\big(\e^{-\delta x}\big)\quad\text{as }x\to\infty\quad\text{with } \delta>1/2m\,, \text{ and }\\
f(x)\ &=\ \Oh\big(x^\beta\big)\quad\text{as }x\to 0\quad\text{with }\beta>r/2-1\,.
\end{split}\end{gather}



\subsection{Lognormal reference distribution}\label{SS:LogNNu}
The lognormal distribution $\LN(\mu, \sigma^2)$ is the distribution of $\e^Y$ where $Y \sim \Norm(\mu, \sigma^2)$. It has support on $\RL_+$. The polynomials orthogonal to the $\LN(\mu, \sigma^2)$ are given in the following proposition, to be proved in the Appendix:

\begin{theorem}\label{LogNormalOrthogonalPolynomialProposition}
The polynomials orthonormal with respect to the lognormal distribution are given by
\begin{equation}\label{eq:OrthonormalPolynomialLogNormalExpression}
Q_{k}(x)=\frac{\e^{-\frac{k^{2}\sigma^{2}}{2}}}{\sqrt{\left[\e^{-\sigma^{2}},\e^{-\sigma^{2}}\right]_{k}}}\sum_{i=0}^{k}(-1)^{k+i}\e^{-i\mu-\frac{i^{2}\sigma^{2}}{2}}e_{k-i}\left(1,\ldots,\e^{(k-1)\sigma^{2}}\right)x^{i},
\end{equation}
for $k\in\NZ$ where
\begin{equation}\label{eq:ElementarySymmetricPolynomial}
e_{i}\left(X_{1},\ldots,X_{k}\right)=
\begin{cases}
\sum_{1\leq j_{1}<\ldots<j_{i}\leq k}X_{j_{1}}\ldots X_{j_{i}}, &\mbox{for } i\leq k, \\
0,& \mbox{for } i>k,
\end{cases}
\end{equation}
are the elementary symmetric polynomials and $\left[x,q\right]_{n}=\prod_{i=0}^{n-1}\left(1-xq^{i}\right)$ is the Q-Pochhammer symbol.
\end{theorem}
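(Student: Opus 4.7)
The plan is to verify orthonormality directly by computing $\Exp[Q_k(X)X^j]$ against the explicit lognormal moments $\Exp[X^n] = \e^{n\mu + n^2\sigma^2/2}$, and to show that the coefficients in \eqref{eq:OrthonormalPolynomialLogNormalExpression} have been engineered precisely so that these expectations collapse into a polynomial in $q^j$ (with $q := \e^{\sigma^2}$) whose roots are exactly $1, q, \ldots, q^{k-1}$. Orthogonality to $\{1, X, \ldots, X^{k-1}\}$ is then immediate, and unit norm will follow from a direct normalisation check.

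First I would strip off the prefactor, writing $Q_k = c_k P_k$ with
\[
c_k\ =\ \frac{\e^{-k^2\sigma^2/2}}{\sqrt{[\e^{-\sigma^2},\e^{-\sigma^2}]_k}},\qquad P_k(x)\ =\ \sum_{i=0}^k (-1)^{k+i}\e^{-i\mu - i^2\sigma^2/2}\,e_{k-i}(1,q,\ldots,q^{k-1})\,x^i.
\]
The key algebraic observation is that
\[
\e^{-i\mu - i^2\sigma^2/2}\cdot\e^{(i+j)\mu + (i+j)^2\sigma^2/2}\ =\ \e^{j\mu + j^2\sigma^2/2}\,q^{ij},
\]
so the $\mu$-dependence and the $i$-dependence separate cleanly after multiplying $P_k(X)$ by $X^j$ and taking expectations, giving
\[
\Exp\bigl[P_k(X)X^j\bigr]\ =\ \e^{j\mu + j^2\sigma^2/2}\sum_{i=0}^k (-1)^{k+i}\,e_{k-i}(1,q,\ldots,q^{k-1})\,(q^j)^i.
\]

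The main substantive step --- and the one I expect to be the main obstacle to discovering the formula in the first place --- is recognising the remaining sum as the evaluation of a factored polynomial. Applying the generating identity $\prod_{l=0}^{k-1}(t+a_l) = \sum_{i=0}^k e_{k-i}(a_0,\ldots,a_{k-1})\,t^i$ with $t = -q^j$ and $a_l = q^l$, then collecting signs, should yield
\[
\sum_{i=0}^k (-1)^{k+i}\,e_{k-i}(1,q,\ldots,q^{k-1})\,(q^j)^i\ =\ \prod_{l=0}^{k-1}(q^j - q^l).
\]
For $j\in\{0,\ldots,k-1\}$ the factor with $l=j$ vanishes, so $\Exp[P_k(X)X^j]=0$ and $Q_k$ is orthogonal to every polynomial of degree less than $k$.

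For $j=k$ I would evaluate the surviving product by pulling $q^l$ out of each factor:
\[
\prod_{l=0}^{k-1}(q^k - q^l)\ =\ q^{k(k-1)/2}\prod_{m=1}^k (q^m - 1)\ =\ q^{k^2}\,[\e^{-\sigma^2},\e^{-\sigma^2}]_k,
\]
using $q^m - 1 = q^m(1-q^{-m})$ to convert the second product into the stated q-Pochhammer symbol. Combined with the leading coefficient $c_k\e^{-k\mu - k^2\sigma^2/2}$ of $Q_k$ and the identity $\Exp[Q_k(X)^2] = \mathrm{lead}(Q_k)\cdot\Exp[Q_k(X)X^k]$ (which holds because $Q_k$ is already orthogonal to lower powers), the specific choice of $c_k$ is exactly what forces $\Exp[Q_k(X)^2]=1$. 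The remaining work is pure bookkeeping.
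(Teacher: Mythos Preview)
Your proof is correct and genuinely different from the paper's. The paper proceeds \emph{constructively}: it starts from the classical Hankel--determinant representation $Q_n(x)=\bigl(D_{n-1,n-1}D_{n,n}\bigr)^{-1/2}\det\{\ldots\}$, reduces the moment Hankel determinants $D_{n,n}$ to Vandermonde form by row and column scalings (exploiting $s_n=p^n q^{n^2}$), and then expands along the last row to read off the coefficients. Your argument instead \emph{verifies} the stated formula directly: the key observation that $\sum_{i=0}^k(-1)^{k+i}e_{k-i}(1,q,\ldots,q^{k-1})(q^j)^i=\prod_{l=0}^{k-1}(q^j-q^l)$ immediately kills $\Exp[P_k(X)X^j]$ for $j<k$, and the $j=k$ evaluation gives the normalisation. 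Your route is shorter and avoids all determinant manipulation, but it presupposes the formula; the paper's route explains where the elementary symmetric polynomials and the q-Pochhammer normalisation come from in the first place. Both arguments are complete once one notes (as is clear from your formula for the leading coefficient) that $\deg Q_k=k$, so orthogonality to $\{1,X,\ldots,X^{k-1}\}$ plus unit norm pins the $Q_k$ down up to sign.
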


\begin{remark}\label{Rem:27.10b}
The result of Theorem~\ref{LogNormalOrthogonalPolynomialProposition}
does not appear to be in the literature; the closest reference
seems to be a 1923 paper by Wigert~\cite{Wi23} who considers
the distribution with p.d.f.\
$\ell\e^{-\ell^{2}\ln^{2}(x)}/\sqrt{\pi}$
(later called the Stieltjes--Wigert distribution).
\hfill $\Diamond$ \end{remark}

The equivalent of condition \eqref{SA7.11a} for $f/f_\nu\in\L^2(\nu)$ now becomes
\begin{align}\label{SA7.11c}
f(x)\ &=\ \Oh\big(\e^{-b \log^2x}\big)\quad\text{for }x\to 0 \text{ and } \infty \quad\text{with } b> \big(4\sigma^2\big)^{-1}\,,
\end{align}
which is rather mild. However,
a key difficulty in taking the reference distribution as lognormal is the following
result related to the fact that the lognormal and the Stieltjes-Wigert distributions are not characterised by their moments, see \cite{Heyde63,Berg1984,Ch79,Ch03}. Hence, the orthogonal polynomials associated with the lognormal p.d.f.\ and the Stieltjes-Wigert p.d.f.\ are also the orthogonal polynomials for some other distribution.
\begin{proposition} \label{prop:ln_incomplete} The set of orthonormal polynomials in
Theorem~\ref{LogNormalOrthogonalPolynomialProposition} is incomplete
in $\L^2(\nu)$. That is, {\rm span}$\{Q_k\}_{k \in \NZ}$ is a proper subset of $\L^2(\nu)$.
\end{proposition}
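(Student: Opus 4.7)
The plan is to invoke the classical moment-indeterminacy of the lognormal distribution, already alluded to in the paper via \cite{Heyde63,Berg1984}. I would exhibit a nonzero $h\in\L^2(\nu)$ that is orthogonal in $\L^2(\nu)$ to every polynomial; since each $Q_k$ is a polynomial, this places $h$ in the orthogonal complement of $\overline{\mathrm{span}}\{Q_k\}$ while $h\neq 0$, so $\overline{\mathrm{span}}\{Q_k\}$ is a proper closed subspace of $\L^2(\nu)$, which in particular implies the stated claim.

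Following Heyde, I would take
$$h(x)\;=\;\sin\bigl(2\pi\sigma^{-2}(\log x-\mu)\bigr),\qquad x>0.$$
Boundedness $|h|\le 1$ gives $h\in\L^2(\nu)$ for free, and since $h$ is a nonconstant continuous function on the support of $\nu$ we have $\|h\|_\nu>0$.

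The heart of the proof is the identity
$$\int_0^\infty x^k h(x) f_\nu(x)\,\dif x \;=\; 0 \qquad \text{for every } k\in\NZ.$$
The substitution $y=\log x-\mu$ absorbs the shift inside the sine and pulls out a factor $\e^{k\mu}$, reducing the claim to
$$\int_{-\infty}^\infty \e^{ky}\sin(2\pi\sigma^{-2}y)\,\frac{1}{\sigma\sqrt{2\pi}}\,\e^{-y^2/(2\sigma^2)}\,\dif y \;=\; 0.$$
Completing the square in the Gaussian rewrites this as $\e^{k^2\sigma^2/2}\,\Exp[\sin(2\pi\sigma^{-2}W)]$ with $W\sim\Norm(k\sigma^2,\sigma^2)$, and the characteristic-function identity $\Exp[\sin(tW)]=\e^{-t^2\sigma^2/2}\sin(t\,\Exp W)$, applied with $t=2\pi/\sigma^2$, produces a factor $\sin(2\pi k)=0$, which closes the computation.

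Since every polynomial is a finite linear combination of the monomials $x^k$, this gives $\langle p,h\rangle_\nu=0$ for every polynomial $p$, and in particular $\langle Q_k,h\rangle_\nu=0$ for all $k\in\NZ$, completing the argument. The only mildly delicate step is guessing the right oscillatory perturbation $h$; once the ansatz $\sin(2\pi\sigma^{-2}(\log x-\mu))$ is written down, the remaining calculation is routine, so I do not anticipate any substantive obstacle.
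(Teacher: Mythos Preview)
Your proof is correct. Both your argument and the paper's exploit the moment indeterminacy of the lognormal, but the routes differ in style. The paper invokes (via Berg) the abstract existence of a random variable $X$ with the same moments as the lognormal $Y$ and with $f_X/f_\nu$ bounded, then computes the projection of $f_X/f_\nu$ onto $\mathrm{span}\{Q_k\}$: since the moments match, each coefficient $\langle f_X/f_\nu,Q_k\rangle=\Exp[Q_k(X)]=\Exp[Q_k(Y)]$ collapses to the $k=0$ term, giving projection $1\neq f_X/f_\nu$. Your approach is more explicit and self-contained: you write down the concrete Heyde perturbation $h(x)=\sin\bigl(2\pi\sigma^{-2}(\log x-\mu)\bigr)$ and verify by a direct Gaussian computation that $\langle x^k,h\rangle_\nu=0$ for all $k$. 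In effect your $h$ is, up to a scalar, exactly $f_X/f_\nu-1$ for one of the distributions in Heyde's family, so the two arguments are close cousins; yours trades the external reference for a short closed-form calculation, while the paper's version is briefer but leans on the cited existence result and emphasises the projection viewpoint.
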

\begin{proof} Let $Y$ be a r.v.\ whose distribution is the given lognormal $\nu$
and $X$ a r.v.\ with a distribution different from $Y$
but with the same moments. According to \cite[pp.\ 201--202]{Berg1984} such an $X$ can be
chosen such that $f_X/f_\nu$ is bounded and hence in $\L^2(\nu)$. The projection of
$f/f_\nu$  onto ${\rm span}\{Q_k\}$ is then
\begin{align*}\sum_{k=0}^\infty \left<f/f_\nu,Q_k\right>Q_k\ &=\
\sum_{k=0}^\infty \Exp\left[Q_k(X)\right]Q_k\ =\ \sum_{k=0}^\infty \Exp\left[Q_k(Y)\right]Q_k\\
&=\ Q_0\ =\ 1\,\neq f/f_\nu,
\end{align*}
where the first step used \eqref{28.10a} and the second that the moments are the same.
This implies $f/f_\nu\in\L^2(\nu)\setminus{\rm span}\{Q_k\}$ and the assertion.
\end{proof}
%

\subsection{Convergence of the estimators  w.r.t.\ $K$ }

Orthogonal polynomial approximations generally become more accurate as the order of the approximation $K$ increases. Figure~\ref{fig:converg} shows a specific orthogonal polynomial approximation, $\widehat{f}_{\Norm}$ (to be described in Section~\ref{SS:LNNormalNu}), converging to the true $\SLN$ density $f$ for increasing $K$. In this example, we take the $\SLN$ distribution with $\mu=(0,0,0)^\tr$, $\Sigma_{ii} = 0.1$, and $\rho = -0.1$.

\begin{figure}
\centering
\includegraphics[width=\textwidth]{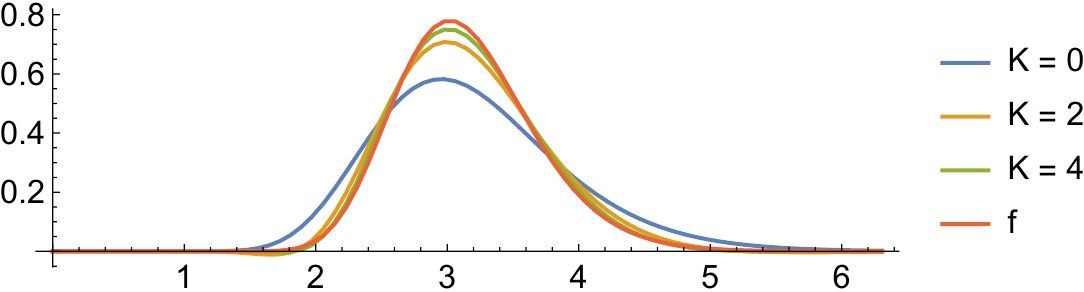}
\caption{Examples of orthogonal polynomial approximations using a $\Norm(1.13,0.23^2)$ reference converging to the target $f$ with increasing $K$.}
\label{fig:converg}
\end{figure}

Proposition~\ref{prop:ln_incomplete} implies that orthogonal polynomial approximations with a lognormal reference distribution cannot be relied upon to converge to the desired target density but may have a different limit (the orthogonal projection described there). The next plot, Figure~\ref{fig:bad_converge}, illustrates this phenomenon. The approximation appears to converge, but not to the target density.
Our theoretical discussion suggests that this incorrect limit density has the same moments as the target lognormal distribution,
and this was verified numerically for the first few moments,

\begin{figure}
\centering
\includegraphics[width=\textwidth]{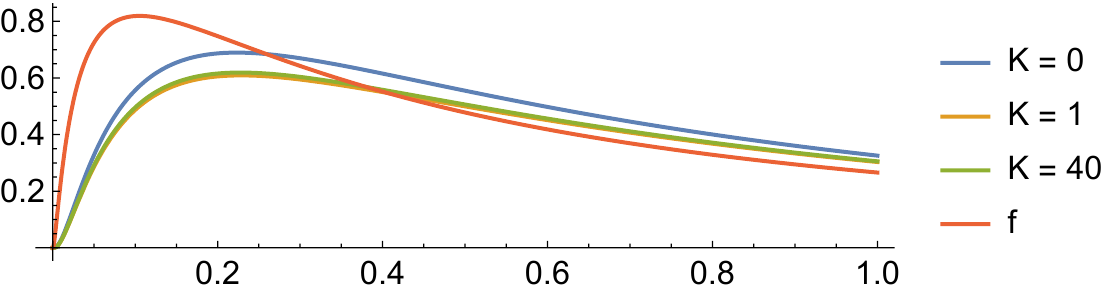}

\caption{Example of orthogonal polynomial approximations of $f$ using a $\LN(0, 1.22^2)$ reference not converging to the $\LN(0, 1.50^2)$ target.}
\label{fig:bad_converge}
\end{figure}

Lastly, it must be noted that we cannot in practice take $K$ arbitrarily large, due to numerical errors incurred in calculating the $\{a_k\}$ coefficients. Obviously this can be overcome by using infinite precision operations, however this swiftly becomes prohibitively slow. Software tools like \texttt{Mathematica} allow for arbitrarily large but finite precision, which gives on the flexibility to choose a desired accuracy/speed trade-off. We use this technology and select $K \le 40$.

\section{Application to  lognormal sums}\label{S:LN_Appl}
We now turn to our main case of interest where $X=S$ is a lognormal sum. Specifically,
\begin{equation}\label{7.12a}
S=\e^{X_{1}}+\ldots+\e^{X_n} \,,\quad n \ge 2\,,
\end{equation}
where the vector $\bfX=\left(X_{1},\ldots,X_{n}\right)$ is governed by a multivariate normal distribution $\Norm\left(\bfmu, \bfSigma\right)$, where $\bfmu=(\mu_{1},\ldots,\mu_{n})^\tr$ is the mean vector and $\bfSigma=\left(\sigma_{ij}\right)$ the covariance matrix.
We write this distribution as $\SLN\left(\bfmu, \bfSigma \right)$, and hereafter denote its p.d.f.\ as $f$. We are interested in computing the p.d.f.\ when the summands exhibit dependency ($\bfSigma$ is non-diagonal). This is an ambitious goal given that the p.d.f.\ of the sum of two i.i.d lognormally distributed random variables is already unknown. The validity of the polynomial approximations rely on the $\L^2$ integrability condition~\eqref{L2Condition1}, which is difficult to check because the p.d.f.\ of $S$ is not available. We will need asymptotic results describing the left and the right tail of the distribution of $S$, which we collect in the following subsection.

\subsection{Tail asymptotics of lognormal sums}\label{SS:LNSumsAsympt}

The tail asymptotics of $f(x)$ are given in the following lemma, which simply collects the results from Corollary~2 of \cite{tankov2015tail} and Theorem~1 of \cite{AsRo08}.

\begin{lemma} \label{le:SLNTails}
We have
\begin{eqnarray}
	f(x) &=&  \Oh ( \exp\{-c_1 \ln(x)^2 \} ) \text{ as }x\to 0 \text{ and } \label{SLNLeftTail} \\
	f(x) &=& \Oh ( \exp\{-c_2 \ln(x)^2 \} ) \text{ as }x\to \infty \label{SLNRightTail}
\end{eqnarray}
where
\[
	c_1 = \big[ 2 \min_{\bm{w} \in \Delta} \bm{w}^\tr \bm{\Sigma}^{-1} \bm{w} \big]^{-1} \, \text{ and } \,\,\, c_2 = \big[ 2 \max_{i=1,\dots,n} \sigma_{ii} \big]^{-1}  \,,
\]
with the notation that $\Delta = \{ \bm{w} \,|\, w_i \in \RL_+, \sum_{i=1}^n w_i = 1 \}$.
\end{lemma}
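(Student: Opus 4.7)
The plan is to derive each of the two tail bounds from the corresponding asymptotic in the cited references; because the lemma is explicitly announced as a collation of known results, the real work is to verify that the exponents in those references reduce exactly to the constants $c_1$ and $c_2$ stated here, and that the asymptotics are strong enough to apply to the density $f$ itself (not merely to $\Pr(S>x)$ or $\Pr(S\le x)$).

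For the right tail I would start from the elementary sandwich
\[
\max_{1\le i\le n} \e^{X_i} \ \le\ S\ \le\ n\max_{1\le i\le n}\e^{X_i},
\]
so that $\Pr(S>x)$ is, up to constants, of the same order as $\max_i \Pr(\e^{X_i}>x)$ as $x\to\infty$. Each marginal $\e^{X_i}$ is $\LN(\mu_i,\sigma_{ii})$ and the standard Mills-ratio estimate gives $\Pr(\e^{X_i}>x)=\Oh\bigl(\exp\{-\log^2(x)/(2\sigma_{ii})\}\bigr)$, so the slowest-decaying summand, namely the one with the largest diagonal $\sigma_{ii}$, dominates; this yields $c_2=[2\max_i\sigma_{ii}]^{-1}$. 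To upgrade the survival-function bound to a density bound I would appeal to Theorem~1 of \cite{AsRo08}, which is already phrased at the density level for $\SLN$ sums and therefore delivers \eqref{SLNRightTail} directly.

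For the left tail I would invoke Corollary~2 of \cite{tankov2015tail}. The underlying heuristic is a large-deviations argument: for $S=\sum_i\e^{X_i}$ to be small, the Gaussian vector $\bfX$ must lie deep in the negative orthant, so one minimises the Gaussian rate function $\tfrac12(\bfx-\bfmu)^\tr\bfSigma^{-1}(\bfx-\bfmu)$ subject to $\sum_i\e^{x_i}\le x$. Writing $\bfx=\log(x)\bfw+\bfy$ with $\bfw$ a probability vector and letting $x\to 0$, the leading contribution is $\tfrac12\log^2(x)\,\bfw^\tr\bfSigma^{-1}\bfw$, and the optimal $\bfw\in\Delta$ is the one minimising this quadratic form, which is precisely the constant appearing in $c_1$. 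The refinement from a logarithmic asymptotic on $\Pr(S\le x)$ to the density-level bound \eqref{SLNLeftTail} is precisely what \cite{tankov2015tail} supplies.

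The main obstacle I anticipate is essentially bookkeeping: matching the exponents in the cited papers (which use slightly different parametrisations) to the clean constants $c_1,c_2$ written here, and checking that the $\Oh$-bounds transfer cleanly from CDF/survival form to density form. A small ancillary observation worth recording is that $\min_{\bfw\in\Delta}\bfw^\tr\bfSigma^{-1}\bfw>0$, which follows from positive-definiteness of $\bfSigma^{-1}$ together with $\bfzero\notin\Delta$, so the exponent defining $c_1$ is non-degenerate and the left-tail bound is genuinely of log-quadratic decay.
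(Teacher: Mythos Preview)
Your proposal is correct and matches the paper's approach: the paper gives no proof at all, simply stating that the lemma ``collects the results from Corollary~2 of \cite{tankov2015tail} and Theorem~1 of \cite{AsRo08}.'' Your write-up does exactly this---defer to the two cited references for the density-level asymptotics---while adding heuristic justification for why the constants $c_1,c_2$ take the stated form; that extra sanity-checking is more than the paper provides, but the underlying route is identical.
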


We are also interested in the asymptotic behaviour of $Z = \ln(S)$ later in the paper. Writing the p.d.f.\ of $Z$ as $f_Z$ we have $f_Z(z) = \e^{z}f(\e^{z})$. Together with L'H\^opital's rule this gives the following results (extending \cite{GaXuYe08}):

\begin{corollary} \label{co:LogSLNTails}
We have
\begin{align} \label{LogSLNLeftTail}
	f_Z(z)  \ &=\  \Oh(\exp \{-c_1 z^2 \} ) \text{\ as } z\to -\infty
	 \text{ and } \\
 \label{LogSLNRightTail}
	f_Z(z) \ &=\ \Oh(\exp\{-c_2 z^2 \}) \text{\ as } z \to +\infty
\end{align}
where the constants are as in Lemma~\ref{le:SLNTails}.
\end{corollary}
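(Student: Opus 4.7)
The plan is to apply the change-of-variables formula $f_Z(z)=\e^z f(\e^z)$ and then substitute the two asymptotic bounds from Lemma~\ref{le:SLNTails} into this identity, treating the left and right tails separately. The only subtlety is that the Jacobian factor $\e^z$ must be absorbed into the Gaussian factor $\exp\{-c_i z^2\}$ without worsening the exponent.

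For the left tail, I would argue as follows. As $z\to -\infty$, setting $x=\e^z\to 0$ and invoking \eqref{SLNLeftTail} gives $f(\e^z)=\Oh(\exp\{-c_1 z^2\})$. Since $\e^z\le 1$ on $(-\infty,0]$, multiplying by the Jacobian preserves the bound, so $f_Z(z)=\e^z f(\e^z)=\Oh(\exp\{-c_1 z^2\})$. No limiting argument is needed here; the $\e^z$ factor is already bounded (indeed, vanishing).

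For the right tail the $\e^z$ factor blows up, and this is the only place where genuine work is needed. Using \eqref{SLNRightTail} with $x=\e^z$ yields $f_Z(z)=\Oh(\exp\{z-c_2 z^2\})$. To absorb the linear term into the quadratic, I would either complete the square, writing
\begin{equation*}
z-c_2 z^2\ =\ -c_2\!\left(z-\tfrac{1}{2c_2}\right)^{\!2}+\tfrac{1}{4c_2},
\end{equation*}
so that $\exp\{z-c_2 z^2\}=\Oh(\exp\{-c_2 z^2\})$ up to a shifted quadratic that differs from $-c_2 z^2$ by a linear term of lower order; or equivalently apply L'H\^opital's rule to the ratio $\log f_Z(z)/(-c_2 z^2)=(z+\log f(\e^z))/(-c_2 z^2)$, where the $1/z$ contribution from differentiating the numerator is negligible against the $2c_2 z$ from the denominator, so the $\limsup$ is at most $1$. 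Either route shows that the additive $z$ is dominated by $c_2 z^2$ at infinity, yielding $f_Z(z)=\Oh(\exp\{-c_2 z^2\})$ as $z\to+\infty$.

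The main ``obstacle'' is really just this bookkeeping in the right tail; there is no deep difficulty because the Jacobian contributes only a linear term in the exponent, which is always absorbed by a strictly quadratic decay rate. I would finish by noting that this is precisely the L'H\^opital step referenced in the text, and that the constants $c_1,c_2$ transfer unchanged from Lemma~\ref{le:SLNTails} since the transformation $z\mapsto \e^z$ sends $\ln^2 x$ to $z^2$ exactly.
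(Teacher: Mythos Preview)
Your approach---the change of variables $f_Z(z)=\e^z f(\e^z)$ followed by absorbing the Jacobian factor, with L'H\^opital handling the right tail---is exactly what the paper does (its entire argument is the one sentence preceding the corollary). One caveat on your first route for the right tail: the literal claim $\exp\{z-c_2 z^2\}=\Oh(\exp\{-c_2 z^2\})$ is false, since the ratio is $\e^z\to\infty$; completing the square only shifts the quadratic and does not kill the linear term in the strict $\Oh$ sense, so of your two options it is really the L'H\^opital route (giving $\log f_Z(z)/(-c_2 z^2)\to 1$) that matches the paper's intended level of rigor, and this is also all that is needed downstream since Proposition~\ref{eq:IntegrabilityConditionNormalHermiteExpansion} uses a strict inequality.
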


\subsection{Lognormal sums via a normal reference distribution} \label{SS:LNNormalNu}

Consider transforming $S$ to $Z=\ln(S)$ and expanding this density with orthogonal polynomials using a normal distribution as reference. That is, our approximation to $f$ using a $\Norm(\mu, \sigma^2)$ reference is
\[ \widehat{f}_{\Norm} = \frac{1}{\sigma x} \widehat{f}_{Z}\left(\frac{\ln x - \mu}{\sigma} \right) \quad\text{where}\quad \widehat{f}_{Z}(z) = \phi(z) \sum_{i=1}^K a_i \, Q_i(z) \,, \]
with $\phi(\cdot)$ being the standard normal p.d.f. The following result tells us when the integrability condition $f_{Z}/f_\nu \in\mathcal{L}^{2}(\nu)$ is satisfied.
It follows immediately by combining \eqref{SA7.11a} and Corollary~\ref{co:LogSLNTails}

\begin{proposition}\label{eq:IntegrabilityConditionNormalHermiteExpansion}
Consider $Z=\ln(S)$ where $S$ is $\mathcal{SLN}(\boldsymbol{\mu},\bfSigma)$ distributed. Let $\nu$ be the probability measure associated to the normal distribution $\mathcal{N}(\mu,\sigma^2)$. We have $f_Z / f_\nu \in\mathcal{L}^{2}(\nu)$ if
\begin{equation} \label{normal_int_cond_1}
	2 \sigma^2 > (2 c_2)^{-1} = \max_{i=1,\dots,n} \Sigma_{ii} \,.
\end{equation}
\end{proposition}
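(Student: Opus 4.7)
The plan is to obtain this as an immediate corollary of the general sufficient condition \eqref{SA7.11a} for the normal reference, applied to the density $f_Z$, with the required tail decay supplied by Corollary~\ref{co:LogSLNTails}. No new analytic estimate is needed; this is essentially a bookkeeping exercise.

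Concretely, I would instantiate \eqref{SA7.11a} with $f$ replaced by $f_Z$ and $\nu$ taken as $\Norm(\mu,\sigma^2)$, reading off that $f_Z/f_\nu\in\L^2(\nu)$ whenever there is some $a>(4\sigma^2)^{-1}$ with $f_Z(z)=\Oh(\e^{-az^2})$ on both tails. The tail-splitting argument in the paragraph right after \eqref{SA7.11a} justifies this: the integrand in $\int f_Z^2/f_\nu\,\dif z$ is bounded on a finite middle interval and is $\Oh(\e^{-bz^2})$ with $b=2a-1/(2\sigma^2)>0$ on each tail, so all three contributions are finite.

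Then I would substitute the tail rates from Corollary~\ref{co:LogSLNTails}, namely $f_Z(z)=\Oh(\e^{-c_1 z^2})$ as $z\to -\infty$ and $f_Z(z)=\Oh(\e^{-c_2 z^2})$ as $z\to +\infty$. Choosing $a=\min(c_1,c_2)$ in the previous step yields the sufficient condition $\min(c_1,c_2)>(4\sigma^2)^{-1}$, equivalently $2\sigma^2>\max((2c_1)^{-1},(2c_2)^{-1})$. Since $(2c_2)^{-1}=\max_i\Sigma_{ii}$ by the definition of $c_2$, the right-tail half of this inequality is exactly the displayed condition \eqref{normal_int_cond_1}.

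The only step that requires any real thought is the comparison $(2c_1)^{-1}\le(2c_2)^{-1}$, which is what licences keeping only \eqref{normal_int_cond_1}: in the parameter regimes relevant to the paper the comparison holds, because the left tail of a lognormal sum is no heavier than the right tail of its heaviest summand, so the right-tail bound is the binding one and the left-tail bound is automatic. That is the sole obstacle in what is otherwise a purely symbol-pushing argument from the two cited results.
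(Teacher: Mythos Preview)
Your approach is exactly the paper's own: the text immediately preceding the proposition states that it ``follows immediately by combining \eqref{SA7.11a} and Corollary~\ref{co:LogSLNTails}'', and that is precisely what you carry out. You are in fact slightly more careful than the paper in flagging that the displayed condition \eqref{normal_int_cond_1} handles only the right-tail constraint and that one implicitly also needs $c_1\ge c_2$; the paper leaves this step unspoken.
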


Computing the $\{ \hat{a}_k \}_{k \in \NZ}$ coefficients can be done using Crude Monte Carlo (CMC), as in
\[ \widehat{a}_k = \frac1R \sum_{r=1}^R Q_n(S_r) \,, \quad S_1, \dots, S_R \iidDist \SLN(\bfmu, \bfSigma) \]
for $k = 0, \dots, K$. We can use the same $S_1$, \dots, $S_R$ for all $\widehat{a}_k$ together with a smoothing technique called \emph{common random numbers} \cite{asmussen2007stochastic,glasserman2003monte}. Note that a non-trivial amount of computational time is typically spent just constructing the Hermite polynomials. Incorporating the Hermite polynomial's recurrence relation in our calculations achieved a roughly $40\times$ speed-up compared with using Mathematica's \texttt{HermiteH}.

\subsection{Lognormal sums via a gamma reference distribution} \label{SS:LNGammaNu}

When $\nu$ is Gamma$(r,m)$, it makes little sense to expand $f$ in terms of $\{Q_{k}\}_{k\in\NZ}$ and $f_\nu$ as the integrability condition \eqref{SA7.11b} fails, $f/f_\nu \not\in\L^{2}(\nu)$. The workaround consists in using orthogonal polynomials to expand the \emph{exponentially tilted} distribution, denoted $\SLN_\theta(\bfmu, \bfSigma)$. This distribution's p.d.f.\ is
\begin{equation}\label{ExponentiallyTiltedPDF}
f_\theta(x) = \frac{\e^{-\theta x}f(x)}{\Laplace(\theta)} \,,\quad \theta \ge 0,
\end{equation}
where $\Laplace(\theta)=\Exp[\e^{-\theta S}]$ is the Laplace transform of $S$. Asmussen et al.\ \cite{AsJeRo14b} investigated the use of $f_\theta(x)$ in approximating the survival function of $S$, and developed asymptotic forms and Monte Carlo estimators of this density.
\begin{remark}
The use of gamma distribution and Laguerre polynomials links our approach to a well established technique called the \emph{Laguerre method}. The expansion is an orthogonal projection onto the basis of Laguerre functions constructed by multiplying Laguerre polynomials and the square root of the exponential distribution with parameter $1$. The method is described in \cite{AbChWh95}. Note also that the damping procedure employed when integrability problems arise is quite similar to considering the exponentially tilted distribution instead of the real one. The use of the gamma distribution as reference is applied to actuarial science in \cite{GoLoPo15a,GoLoPo15b}.
$\hfill \Diamond$
\end{remark}
Using \eqref{SA7.11b}, we immediately obtain the following result which
sheds light on how to tune the parameters of the reference gamma distribution so the integrability condition $f_\theta / f_\nu \in\L^2(\nu)$ is satisfied.
\begin{proposition}\label{pr:IntegrabiltyConditionWRTGammaDistribution}
Consider the r.v.\ $S_\theta$ distributed by the exponentially-tilted $\mathcal{SLN}_\theta(\boldsymbol{\mu},\bfSigma)$ distribution. Let $\nu$ be the probability measure associated with the Gamma$(r,m)$ distribution. We have $f_\theta/f_\nu \in\L^2(\nu)$ if
$m>1/2\theta$.
\end{proposition}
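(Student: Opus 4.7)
The plan is to verify the two-sided tail criterion \eqref{SA7.11b} for the tilted density $f_\theta$ and show that $m>1/(2\theta)$ is exactly what forces the right-tail part to hold (the left-tail part being automatic). Recall $f_\theta(x)=\e^{-\theta x}f(x)/\Laplace(\theta)$, so the tail behaviour of $f_\theta$ is governed by that of $f$, which is controlled by Lemma~\ref{le:SLNTails}.

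For the right tail, I would argue that
\[
 f_\theta(x)\ =\ \frac{\e^{-\theta x}}{\Laplace(\theta)}f(x)\ =\ \Oh\bigl(\e^{-\theta x}\bigr)\quad\text{as }x\to\infty,
\]
since $f(x)=\Oh(\e^{-c_2\ln^2 x})$ from \eqref{SLNRightTail} is bounded (in fact $\oh(1)$) for large $x$. Choosing $\delta=\theta$ in the first line of \eqref{SA7.11b}, the required inequality $\delta>1/(2m)$ becomes $\theta>1/(2m)$, i.e.\ $m>1/(2\theta)$, which is precisely the hypothesis. Note that $\theta$ is essentially the best possible decay rate for $f_\theta$: any $\delta>\theta$ would fail because the super-polynomial, sub-exponential factor $\e^{-c_2\ln^2 x}$ cannot offset an extra exponential growth $\e^{(\delta-\theta)x}$; this explains why the criterion $m>1/(2\theta)$ is sharp within this scheme.

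For the left tail, $\e^{-\theta x}\to 1$ as $x\to 0$, so $f_\theta(x)=\Theta(f(x))$ near $0$, and by \eqref{SLNLeftTail} we have $f(x)=\Oh(\e^{-c_1\ln^2 x})$ as $x\to 0$. Since $\e^{-c_1\ln^2 x}=\oh(x^\beta)$ for every $\beta\in\RL$ (the double-exponential decay beats any polynomial), the second line of \eqref{SA7.11b}, which demands $\beta>r/2-1$, is satisfied trivially regardless of the shape parameter $r$. Combining the two tail estimates with \eqref{SA7.11b} yields $f_\theta/f_\nu\in\L^2(\nu)$.

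There is no genuine obstacle here: once Lemma~\ref{le:SLNTails} is in hand, the proof is a direct verification. The one subtlety worth flagging is the right-tail argument, where one must observe that the exponential tilt exactly provides the required linear decay rate $\theta$, and that the lognormal-sum density itself contributes nothing further at the exponential scale, so the gamma scale parameter $m$ must be tuned to be compatible with this rate via $m>1/(2\theta)$.
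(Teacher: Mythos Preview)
Your proof is correct and follows exactly the approach the paper intends: the paper simply states that the result follows immediately from \eqref{SA7.11b}, and you have spelled out that immediate verification, checking the right-tail condition via the exponential factor $\e^{-\theta x}$ and noting the left-tail condition holds automatically from Lemma~\ref{le:SLNTails}. Your additional remark on sharpness is correct but not needed for the proof itself.
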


Hereafter we assume that the parameters $r$ and $m$ of $f_\nu \sim \mathrm{Gamma}(r,m)$ are chosen to satisfy Proposition~\ref{pr:IntegrabiltyConditionWRTGammaDistribution}'s conditions.

Our approximation---based upon rearranging \eqref{ExponentiallyTiltedPDF}---is of the form
\begin{equation}\label{fGammaApprox}
 \widehat{f}(x) = \e^{\theta x} \Laplace(\theta) \widehat{f}_\theta(x) = \e^{\theta x} \Laplace(\theta) \sum_{k=0}^K a_k Q_k(x) f_\nu(x) \,.
\end{equation}
The coefficients $a_k = \Exp [Q_k(S_\theta)]$ can be estimated in (at least) three different ways: (i) using CMC, (ii) using Monte Carlo with a change of measure so $\theta \to 0$, or (iii) by directly computing the moments $\Exp[S_\theta^k]$. The first method is nontrivial, as simulating from $f_\theta$ likely requires using acceptance-rejection (as in \cite{AsJeRo14b}). Options (ii) and (iii) use
\begin{equation} \label{eq:expand_coeffs}
	a_k = \Exp [Q_k(S_\theta)] =: q_{k0} + q_{k1} \Exp[S_\theta] + \dots + q_{kk} \Exp[S_\theta^k]
\end{equation}
where $\{q_{ki}\}$ are the coefficients in $Q_k$, and
\[
\Exp[S_\theta^i] = \frac{\Exp[S^i \e^{-\theta S}]}{\Laplace(\theta)} =: \frac{\Laplace_i(\theta)}{\Laplace(\theta)} \,.
\]
The $\Laplace_i(\theta)$ notation was selected to highlight the link between $\Exp[S_n^i \e^{-\theta S_n}]$ and the $i$th derivative of $\Laplace(\theta)$.

All three methods require access to the Laplace transform, and method (iii) requires $\Laplace_i(\theta)$, however none of $\Laplace(\theta)$ or $\Laplace_i(\theta)$ are available in closed form. Our approach to circumvent these problems is presented in the Appendix.

\section{Numerical illustrations}\label{S:numerical}

We take several approximations $\widehat{f}$ and compare them against the benchmark of numerical integration.
One form of $f$ particularly useful for numerical integration, in terms of the $\LN(\bfmu, \bfSigma)$ density $f_{\mathcal{LN}}$, is as a surface integral,
$ f(s) = n^{-\frac12} \int_{\Delta_n^s} f_{\mathcal{LN}}(\bfx) \dif \bfx $,
where $\Delta_n^s = \{ \bfx \in \RL_+^n : ||\bfx||_1 = s \}$. Mathematica integrates this within a reasonable time for $n=2$ to $4$ using \texttt{NIntegrate} and \texttt{ParametricRegion}). For $n > 4$ we qualitatively assess the performance of the estimators by plotting them.

The quantitative error measure used is the $\L^2$ norm of $(\widehat{f}-f)$ restricted to $(0, \Exp[S])$. We focus on this region as
at one hand it is the hardest to approximate (indeed, Lemma~\ref{le:SLNTails} shows that just a single lognormal is a theoretically justified approximation of the $\SLN$ right tail) and that at the other of high relevance in applications, see for example the introduction of \cite {AsJeRo14b} and the references therein.

\subsection{The estimators}
We will compare the following approximations:
\begin{itemize}
\item the Fenton-Wilkinson approximation $\widehat{f}_{\mathrm{FW}}$, cf.\ \cite{Fe60}, consists in approximating the distribution of $S$ by a single lognormal
with the same first and second moment;
\item the log skew normal approximation $\widehat{f}_{\mathrm{Sk}}$,
cf.\ \cite{hcine2015highly}\footnote{Note that in \cite{hcine2015highly}, the formula for $\epsilon_{\mathrm{opt}}$ contains an typographic error.}, is a refinement of Fenton--Wilkinson by using a log  skew normal as approximation and fitting the left tail in addition to the first and second moment;
\item the conditional Monte Carlo approximation $\widehat{f}_{\mathrm{Cond}}$ , cf.\ Example~4.3 on p.\ 146 of \cite{asmussen2007stochastic}, uses the representation $f(x)=$
$\Exp\bigl[\mathbb{P}(S\in\dif x\,|\,Y)\bigr]$ for some suitable $Y$ (here chosen as one of the
normal r.v.s $X_i$ occurring in \eqref{7.12a}) and simulates the conditional expectation;
\item $\widehat{f}_{\Norm}$ is the approximation described in Section~\ref{SS:LNNormalNu} using a logarithmic transformation
and the Hermite polynomials with a normal reference distribution;
\item $\widehat{f}_{\,\Gamma}$ is the approximation described in Section~\ref{SS:LNGammaNu} using exponential tilting and the Laguerre polynomials with a gamma reference distribution.
\end{itemize}

These approximations are all estimators of functions (i.e., not pointwise estimators, such as in \cite{La15}) and they do not take excessive computational effort to construct. The first two, $\widehat{f}_{\mathrm{FW}}$ and $\widehat{f}_{\mathrm{Sk}}$, only need $\bfmu$ and $\bfSigma$ and do not have any Monte Carlo element. Similarly, the estimator $\widehat{f}_{\,\Gamma}$ when utilising the Gauss--Hermite quadrature described in \eqref{eq:gauss-hermite-quad}
in the Appendix does not use Monte Carlo. For the remaining approximations we utilise the \emph{common random numbers} technique, meaning that the same $R=10^5$ i.i.d.\ $\SLN(\bfmu, \bfSigma)$ samples $\bfS = (S_1, \dots, S_R)^\tr$ are given to each algorithm. Lastly, all the estimators except $\widehat{f}_{\,\Gamma}$ satisfy $\int \widehat{f}(x) \dif x = 1$. One problem with the orthogonal polynomial estimators is that they can take negative values; this can easily be fixed, but we do not make that adjustment here.



For $\widehat{f}_{\Norm}$, we take $\mu = \Exp[Z]$ and $\sigma^2 = \Var[Z]$, calculated using numerical integration.
The $\widehat{f}_{\,\Gamma}$ case is more difficult. Equation \eqref{fGammaApprox} shows that we must impose $\theta m < 1$ to ensure that $\widehat{f}_{\,\Gamma}(x)\to 0$ as $x\to \infty$.
Exploring different parameter selections showed that fixing $\theta = 1$ worked reasonably well. Moment matching $f_\theta$ to $f_\nu$ leads to the selection of $m$ and $r$. The moments of $f_\theta$,
$ \widehat{\Exp f_\theta} = \widehat{\Laplace}_1(\theta) / \widehat{\Laplace}_0(\theta)$  and
 $\widehat{\Var f_\theta} = \widehat{\Laplace}_2(\theta) / \widehat{\Laplace}_0(\theta) - \widehat{\Exp f_\theta}^2  $
can be approximated using the Gauss--Hermite quadrature of \eqref{eq:gauss-hermite-quad}; for this we use $H = 64$, 32, 16 for $n=2$, 3, 4 respectively (and CMC for $n > 4$).

With these regimes, parameter selection for the reference distributions is automatic, and the only choice the user must make is in selecting $K$. In these tests we examined various $K$ from 1 to 40, and show the best approximations found. The source code for these tests is available online at \cite{Code}, and we invite readers to experiment the effect of modifying $K$ and $\theta$ and the parameters of the reference distributions.



\subsection{Results}

For each test case with $n \le 4$ we plot the $\widehat{f}(x)$ and $f(x)$ together and then $(\widehat{f}(x)-f(x))$ over $x \in (0, 2\Exp[S])$. A table then shows the $\L^2$ errors over $(0, \Exp[S])$.
\setlength\extrarowheight{3pt}

\begin{figure}[H]
\centering
\includegraphics[width=\textwidth]{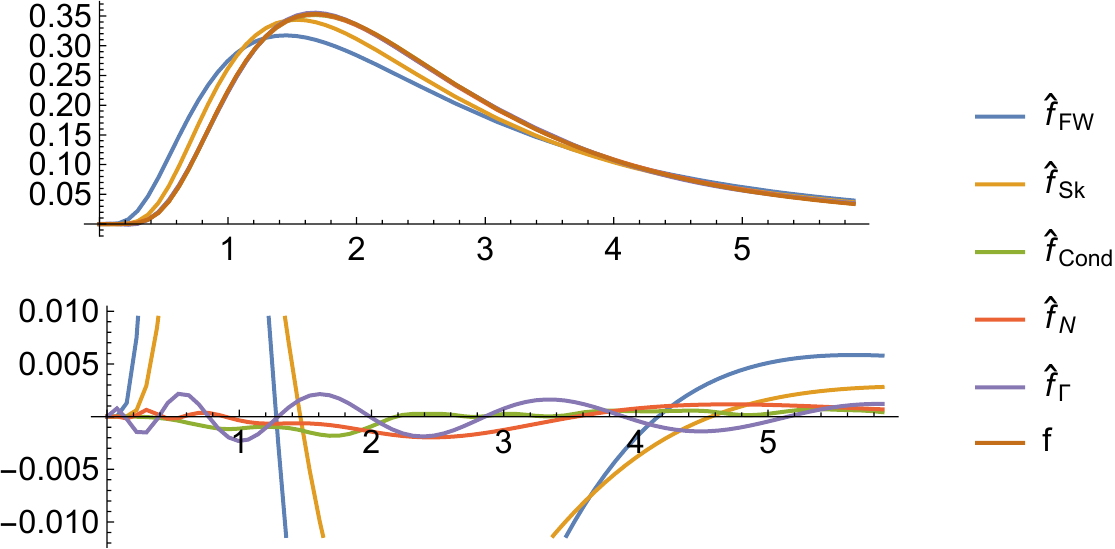}

\vspace{4mm}

\begin{tabular}{c|c|c|c|c|c|}
\cline{2-6}
                         & $\widehat{f}_{\mathrm{FW}}$  & $\widehat{f}_{\mathrm{Sk}}$ & $\widehat{f}_{\mathrm{Cond}}$ & $\widehat{f}_{\Norm}$  & $\widehat{f}_{\,\Gamma}$ \\ \hline
\multicolumn{1}{|c|}{$\L^2$} & $8.01 \stimes 10^{-2}$ & $4.00 \stimes 10^{-2}$ & $1.56 \stimes 10^{-3}$ & $1.94 \stimes 10^{-3}$ & $2.28 \stimes 10^{-3}$ \\  \hline
\end{tabular}
\caption*{Test 1: $\bfmu = (0, 0)$, $\diag(\bfSigma) = (0.5, 1)$, $\rho = -0.2$. Reference distributions used are $\Norm(0.88,0.71^2)$ and Gamma($2.43,0.51$) with $K =$ 32, 16 resp.}
\end{figure}



\begin{figure}[H]
\centering
\includegraphics[width=\textwidth]{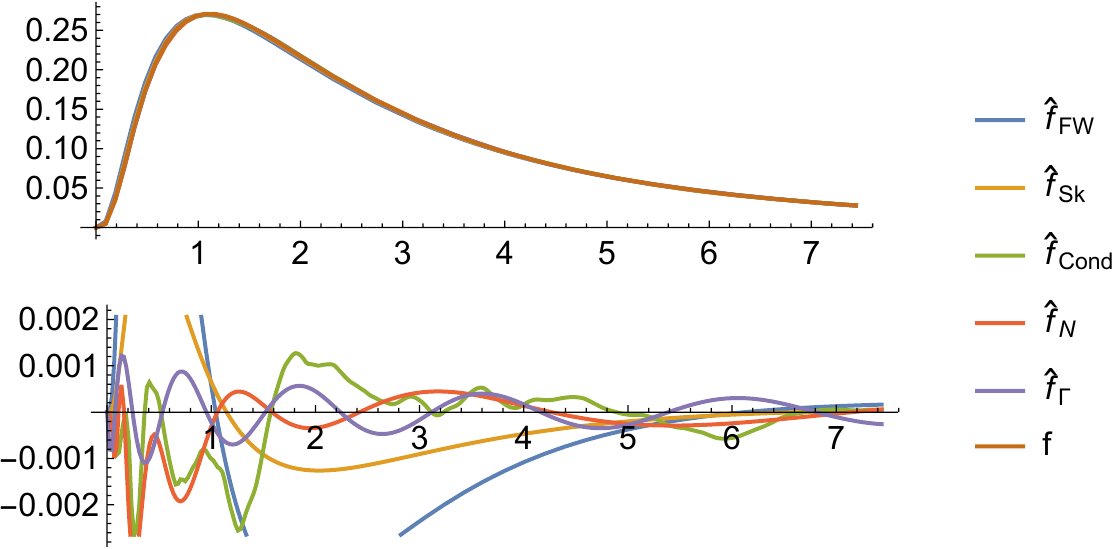}

\vspace{4mm}

\begin{tabular}{c|c|c|c|c|c|}
\cline{2-6}
                         & $\widehat{f}_{\mathrm{FW}}$  & $\widehat{f}_{\mathrm{Sk}}$ & $\widehat{f}_{\mathrm{Cond}}$ & $\widehat{f}_{\Norm}$  & $\widehat{f}_{\,\Gamma}$ \\ \hline
\multicolumn{1}{|c|}{$\L^2$} & $1.02 \stimes 10^{-2}$ & $3.49 \stimes 10^{-3}$ & $1.78 \stimes 10^{-3}$ & $7.86 \stimes 10^{-4}$ & $7.24 \stimes 10^{-4}$ \\ \hline
\end{tabular}
\caption*{Test 2: $\bfmu = (-0.5, 0.5)$, $\diag(\bfSigma)= (1,1)$, $\rho = 0.5$. Reference distributions used are $\Norm(0.91,0.90^2)$ and Gamma($2.35,0.51$) with $K =$ 32, 16 resp.}
\end{figure}



\begin{figure}[H]
\centering
\includegraphics[width=\textwidth]{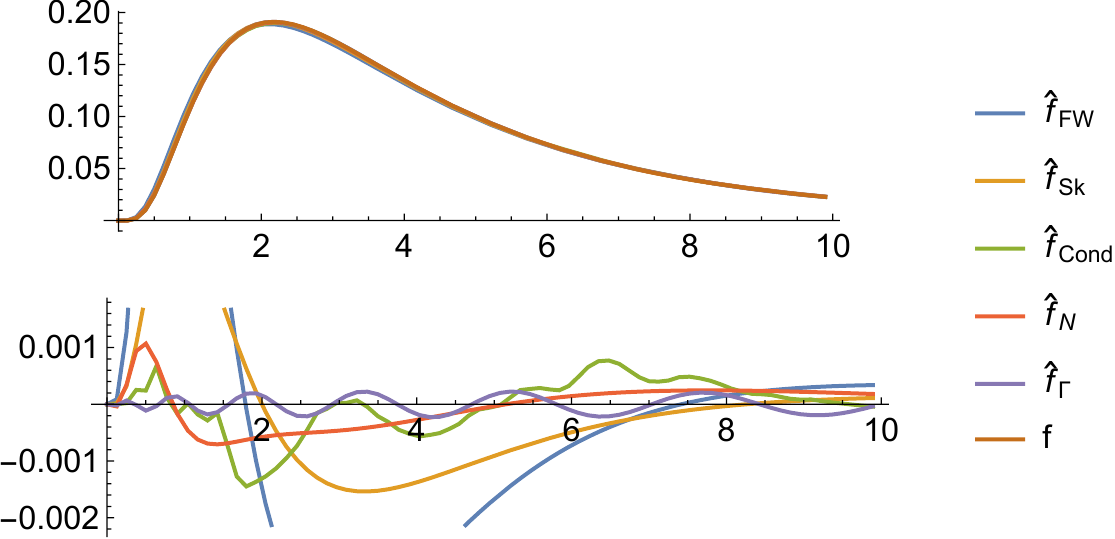}

\vspace{4mm}

\begin{tabular}{c|c|c|c|c|c|}
\cline{2-6}
                         & $\widehat{f}_{\mathrm{FW}}$  & $\widehat{f}_{\mathrm{Sk}}$ & $\widehat{f}_{\mathrm{Cond}}$ & $\widehat{f}_{\Norm}$  & $\widehat{f}_{\,\Gamma}$ \\ \hline
\multicolumn{1}{|c|}{$\L^2$} & $9.48 \stimes 10^{-3}$ & $3.71 \stimes 10^{-3}$ & $1.60 \stimes 10^{-3}$ & $1.18 \stimes 10^{-3}$ & $3.53 \stimes 10^{-4}$ \\ \hline
\end{tabular}
\caption*{Test 3: $n=3$, $\mu_i = 0$, $\Sigma_{ii} = 1$, $\rho = 0.25$. Reference distributions used are $\Norm(1.32,0.74^2)$ and Gamma($3,0.57$) with $K =$ 7, 25 resp.}
\end{figure}



\begin{figure}[H]
\centering
\includegraphics[width=\textwidth]{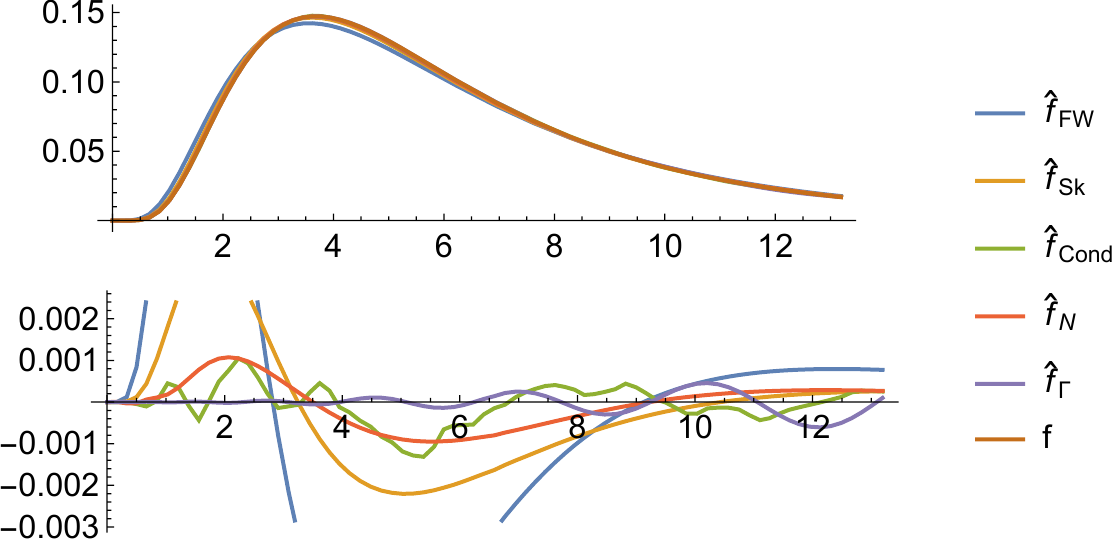}

\vspace{4mm}

\begin{tabular}{c|c|c|c|c|c|}
\cline{2-6}
                         & $\widehat{f}_{\mathrm{FW}}$  & $\widehat{f}_{\mathrm{Sk}}$ & $\widehat{f}_{\mathrm{Cond}}$ & $\widehat{f}_{\Norm}$  & $\widehat{f}_{\,\Gamma}$ \\ \hline
\multicolumn{1}{|c|}{$\L^2$} & $1.82 \stimes 10^{-2}$ & $6.60 \stimes 10^{-3}$ & $1.90 \stimes 10^{-3}$ & $1.80 \stimes 10^{-3}$ & $1.77 \stimes 10^{-4}$ \\ \hline
\end{tabular}
\caption*{Test 4: $n=4$, $\mu_i = 0$, $\Sigma_{ii}=1$, $\rho = 0.1$. Reference distributions used are $\Norm(1.32,0.74^2)$ and Gamma($3.37,0.51$) with $K =$ 18, 18 resp.}
\end{figure}



The following test case shows the density approximations for a large $n$.


\begin{figure}[H]
\centering
\includegraphics[width=\textwidth]{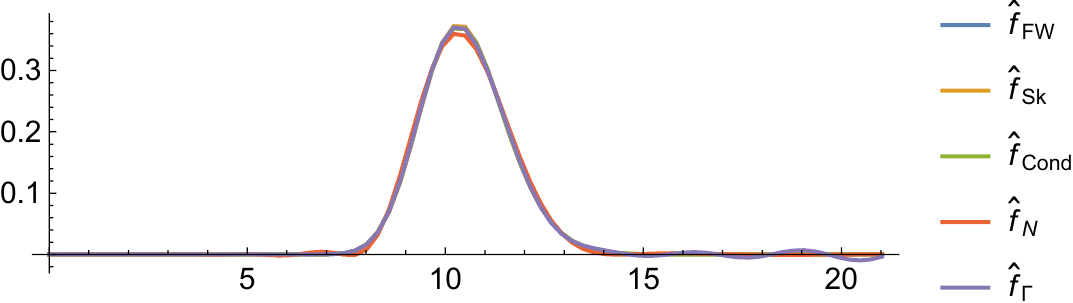}
\caption*{Test 5: Sum of 10 i.i.d.\ $\LN(0,0.1)$ r.v.s. Reference distributions used are $\Norm(2.35,0.23^2)$ and Gamma($12.61,0.25$) with $K =$ 18, 35 resp.}
\end{figure}


Finally, we fit $\widehat{f}_{\Norm}$ and $\widehat{f}_{\,\Gamma}$ to simulated data ($10^5$ replications) for the sum of lognormals with a non-Gaussian dependence structure. Specifically, we take the sum of $n = 3$ standard lognormal r.v.s with a \emph{Clayton copula}, defined by its distribution function
\[
C^{\text{Cl}}_\theta(u_1, \dots, u_n) = \Big( 1 - n + \sum_{i=1}^n u_i^{-\theta} \Big)^{-1/\theta}, \qquad \text{ for } \theta > 0 \,.
\]
The Kendall's tau correlation of the $C^{\text{Cl}}_\theta$ copula is $\tau = \theta / (\theta + 2)$ \cite{mcneil2015quantitative}.

\begin{figure}
\centering
\includegraphics[width=\textwidth]{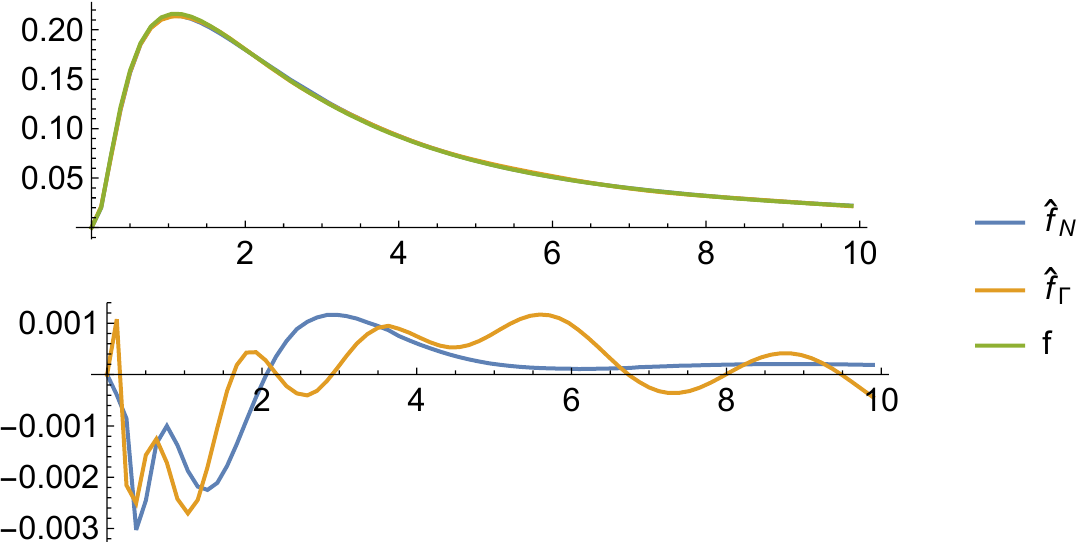}
\caption*{Test 6: Sum of 3 $\LN(0,1)$ r.v.s with $C^{\text{Cl}}_{10}(\cdot)$ copula (i.e., $\tau = \frac56$). Reference distributions used are $\Norm(1.46,0.71^2)$ and Gamma($8.78,0.25$) with $K = 40$. The $\L^2$ errors of $\widehat{f}_{\Norm}$ and $\widehat{f}_{\,\Gamma}$ are $2.45 \times 10^{-3}$ and $2.04 \times 10^{-3}$ respectively.}
\end{figure}





Our overall conclusion of the numerical examples is that no single method can
be considered as universally superior. Of the methods in the literature,
the log skew normal approximations is generally better than Fenton-Wilkinson,
which is unsurprising given it is an extension introducing one more parameter.
The estimators, $\widehat{f}_{\Norm}$ and $\widehat{f}_{\,\Gamma}$, based on orthogonal polynomial approximation techniques, are very flexible. They also display as least as good and sometimes better p.d.f.\ estimates over the interval $(0, \Exp[S])$ and their periodic error indicates that they would supply even more accurate c.d.f.\ estimates. One should note, however, that their
performance relies on the tuning of parameters and that somewhat
greater effort is involved in their computation (though this is mitigated through the availability
of the software in \cite{Code}).

An interesting feature of $\widehat{f}_{\Norm}$ and $\widehat{f}_{\,\Gamma}$ is that
the Frank copula example indicates some robustness to the dependence structure used.
In view of the current interest in financial applications of non-Gaussian dependence this
seems a promising line for future research.
\subsection*{Acknowledgements}
{\small We are grateful to Jakob Schach M\o ller for helpful discussions on orthonormal expansions.}

\begin{appendix}

\renewcommand{\theequation}{A.\arabic{equation}}
\setcounter{proposition}{0}
\renewcommand{\theproposition}{A.\arabic{proposition}}
\setcounter{remark}{0}
\renewcommand{\theremark}{A.\arabic{remark}}

\section*{A.1 Proof of Proposition \ref{LogNormalOrthogonalPolynomialProposition}} \label{app:prop_proof}
\begin{proof}
The polynomials orthogonal with respect to the lognormal distribution will be derived using the general formula
\begin{equation}\label{eq:OrthogonalPolynomialGeneralExpression}
Q_{n}(x)=\frac{1}{\sqrt{D_{n-1,n-1}D_{n,n}}}\displaystyle
\left| \begin{array}{cccc}
s_{0}&s_{1}&\cdots&s_{n} \\
s_{1}&s_{2}&\cdots&s_{n+1}  \\
\vdots&\vdots&&\vdots\\
s_{n-1}&s_{n}&\cdots&s_{2n-1}\\
1&x&\cdots&x^{n}
\end{array}
\right|,\hspace{0.1cm}n\geq1,
\end{equation}
where $\{s_{n}\}_{n\in\NZ}$ denotes the moment sequence of the lognormal distribution and  $D_{n,n}=\left|\left\{s_{k,l}\right\}_{0\leq k,l \leq n}\right|$ is a Hankel determinant. The moments of the lognormal distribution are given by $s_{n}=p^{n}q^{n^{2}}$, where $p=\e^{\mu}$ and $q=\e^{\frac{\sigma^{2}}{2}}$. Consider
\begin{equation}\label{eq:Dn1}
D_{n,n}=\displaystyle
\left| \begin{array}{cccc}
1&pq&\cdots&p^{n}q^{n^{2}} \\
pq&p^{2}q^{4}&&p^{n+1}q^{(n+1)^{2}}\\
\vdots&\vdots&&\vdots\\
p^{n-1}q^{(n-1)^{2}}&p^{n}q^{n^{2}}&\cdots&p^{2n-1}q^{(2n-1)^{2}}\\
p^{n}q^{n^{2}}&p^{n+1}q^{(n+1)^{2}}&\cdots&p^{2n}q^{(2n)^{2}}
\end{array}
\right|,\hspace{0.1cm}n\geq1,
\end{equation}
and denote by $R_{k}$ the $k$th row and by $C_{\ell}$ the $\ell$th column. We apply the elementary operations $R_{k+1}\rightarrow p^{-k}q^{-k^{2}}R_{k+1}$, and $C_{\ell+1}\rightarrow p^{-\ell}q^{-\ell^{2}}C_{\ell+1}$ for $k,\ell=0,\ldots,n$ to get a Vandermonde type determinant. Thus we have
\begin{equation}\label{eq:Dn2}
D_{n,n}=\e^{n(n+1)\mu}\e^{\frac{n(n+1)(2n+1)}{3}\sigma^{2}}\prod_{k=0}^{n-1}
\left[\e^{-\sigma^{2}};\e^{-\sigma^{2}}\right]_{k}\\
\end{equation}
We expand the determinant in \eqref{eq:OrthogonalPolynomialGeneralExpression} with respect to the last row to get
\begin{equation}\label{eq:PolynomialExpression1}
Q_{n}(x)=\frac{1}{\sqrt{D_{n-1,n-1}D_{n,n}}}\sum_{k=0}^{n}(-1)^{n+k}x^{k}D_{n-1,n}^{-k},
\end{equation}
where $D_{n-1,n}^{-k}$ is $D_{n,n}$ with the last row and the $(k+1)$th column deleted. We perform on  $D_{n-1,n}^{-k}$ the following operations: $R_{j+1}\rightarrow p^{-j}q^{-j^{2}}R_{j+1}$, for $j=0,\ldots,n-1$, $C_{j+1}\rightarrow p^{-j}q^{-j^{2}}C_{j+1}$, for $j=0,\ldots,k-1$, and finally $C_{j}\rightarrow p^{-j}q^{-j^{2}}C_{j}$, for $j=k+1,\ldots,n$. We obtain
\begin{equation*}
D_{n-1,n}^{-k}=p^{n^{2}-k}q^{\frac{2n^{3}+n}{3}-k^{2}}\left| \begin{array}{ccccccc}
1&\alpha_{0}&\cdots&\alpha_{0}^{k-1}&\alpha_{0}^{k+1}&\cdots&\alpha_{0}^{n} \\
1&\alpha_{1}&\cdots&\alpha_{1}^{k-1}&\alpha_{1}^{k+1}&\cdots&\alpha_{1}^{n+1}  \\
\vdots&\vdots&&\vdots&\vdots&&\vdots\\
1&\alpha_{n-1}&\cdots&\cdots&\cdots&\cdots&\alpha_{n-1}^{n}
\end{array}
\right|,
\end{equation*}
where $\alpha_{k}=q^{2k}$, for $k=0,\ldots,n-1$. Expanding the polynomial $B(X)=\prod_{i=0}^{n-1}(x-\alpha_{i})$, we get
\begin{equation*}
B(x)=x^{n}+\beta_{n-1}x^{n-1}+\ldots+\beta_{0},
\end{equation*}
where $\beta_{k}=(-1)^{n-k}e_{n-k}\left(\alpha_{0},\ldots,\alpha_{n-1}\right)$, and $e_{k}\left(X_{1},\ldots,X_{n}\right)$ denotes the elementary symmetric polynomial, defined previously in  \eqref{eq:ElementarySymmetricPolynomial}. We apply the elementary operation $C_{n}\rightarrow C_{n}+\sum_{j=0}^{k-1}a_{j}C_{j+1}+\sum_{j=k+1}^{n-1}a_{j}C_{j}$, followed by $n-k$ cyclic permutations to get
\begin{equation}\label{eq:MinorDnn}
D_{n-1,n}^{-k}=p^{n-k}q^{n^{2}-k^{2}}e_{n-k}\left(1,\ldots,q^{2(n-1)}\right)D_{n-1,n-1}.
\end{equation}
Inserting \eqref{eq:Dn2} and \eqref{eq:MinorDnn} into \eqref{eq:PolynomialExpression1} leads to \eqref{eq:OrthonormalPolynomialLogNormalExpression}.
\end{proof}

\section*{A.2 Computing the coefficients of the expansion $\{a_{k}\}_{k\in\NZ}$
in the gamma case} \label{app:proof}

We extend here the techniques developed in \cite{La15} to construct an approximation
for $\Laplace_i(\theta)$. We note that $\Laplace_i(\theta) \propto \int_{\RL^n} \exp\{ -h_{\theta,i}(\bfx) \} \dif \bfx $ where
\[ h_{\theta,i}(\bfx) = - i \ln(\bfone^\tr \ve^{\bfmu + \bfx}) + \theta \bfone^\tr \ve^{\bfmu + \bfx} + \frac12 \bfx^\tr \bfSigma^{-1} \bfx \,, \quad i \in \NZ \,. \]
This uses the notation $\ve^{\bfx} = (\e^{x_1}, \dots, \e^{x_n})^\top$. Next, define $\bfx^*$ as the minimiser of $h_{\theta,i}$ (calculated numerically), and consider a second order Taylor expansion of $h_{\theta,i}$ about $\bfx^*$. Denote $\widetilde{\Laplace}_i(\theta)$ as the approximation where $h_{\theta,i}$ is replaced by this Taylor expansion in $\Laplace_i(\theta)$. Simplifying yields
\begin{equation} \label{firstTerm}
	\widetilde{\Laplace}_i(\theta) = \frac{\exp\{-h_{\theta,i}(\bfx^*)\}}{\sqrt{|\bfSigma \bfH|}}
\end{equation}
where $\bfH$, the Hessian of $h_{\theta,i}$ evaluated at $\bfx^*$, is
\[ \bfH = i\frac{\ve^{\bfmu + \bfx^*} (\ve^{\bfmu + \bfx^*})^\tr}{(\bfone^\tr \ve^{\bfmu + \bfx^*})^2} + \bfSigma^{-1} - \diag(\bfSigma^{-1} \bfx^*) \,. \]

As $\theta \to \infty$ we have $\widetilde{\Laplace}_i(\theta) \to \Laplace_i(\theta)$. We can rewrite $\Laplace_i(\theta) = \widetilde{\Laplace}_i(\theta) I_i(\theta)$ and estimate $I_i(\theta)$, as in \cite{La15}.

\begin{proposition} \label{prop:laplace_derivatives}
The moments of the exponentially-tilted distribution $\SLN_\theta(\bfmu, \bfSigma)$, denoted $\Laplace_i(\theta)$, can be written as $\Laplace_i(\theta) = \widetilde{\Laplace}_i(\theta) I_i(\theta)$ where $\widetilde{\Laplace}_i(\theta)$ is in \eqref{firstTerm} and
\begin{align*}
	I_i(\theta) &= \sqrt{|\bfSigma \bfH|} \, v(\bfzero)^{-1} \, \Exp[v(\bfSigma^{\frac12} Z)]
\end{align*}
where $Z \sim \Norm(\bfzero, \bfI)$, and
\[ v(\bfz) = \exp\{ i \ln(\bfone^\tr \ve^{\bfmu + \bfx^* + \bfz}) - \theta \bfone^\tr \ve^{\bfmu + \bfx^* + \bfz} - (\bfx^*)^\tr \bfSigma^{-1} \bfz \} \,. \]
\end{proposition}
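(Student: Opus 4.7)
The plan is to verify the identity $\Laplace_i(\theta) = \widetilde{\Laplace}_i(\theta)\,I_i(\theta)$ by a direct change of variables, without appealing to any further asymptotic analysis (the Laplace-type approximation $\widetilde{\Laplace}_i$ is only used as a normalising constant to pull out). I would start by writing $\Laplace_i(\theta) = \Exp[S^i \e^{-\theta S}]$ as the integral against the $\Norm(\bfzero,\bfSigma)$ density on $\RL^n$, namely
\[
\Laplace_i(\theta) \;=\; \frac{1}{\sqrt{(2\pi)^n|\bfSigma|}}\int_{\RL^n} \bigl(\bfone^\tr \ve^{\bfmu+\bfx}\bigr)^i\,\e^{-\theta \bfone^\tr \ve^{\bfmu+\bfx}}\,\e^{-\frac12 \bfx^\tr \bfSigma^{-1}\bfx}\,\dif\bfx,
\]
so that the integrand equals $(2\pi)^{-n/2}|\bfSigma|^{-1/2}\exp\{-h_{\theta,i}(\bfx)\}$ as defined in the text.

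Next I would translate the integration variable by the minimiser $\bfx^*$, setting $\bfx = \bfx^* + \bfy$, and expand the quadratic form to obtain
\[
\tfrac12 \bfx^\tr \bfSigma^{-1}\bfx \;=\; \tfrac12 (\bfx^*)^\tr \bfSigma^{-1} \bfx^* + (\bfx^*)^\tr \bfSigma^{-1} \bfy + \tfrac12 \bfy^\tr \bfSigma^{-1} \bfy.
\]
Substituting back, the factor $\e^{-\frac12 (\bfx^*)^\tr \bfSigma^{-1}\bfx^*}$ pulls out of the integral and the remaining integrand is exactly $v(\bfy)$ multiplied by the centred Gaussian density $(2\pi)^{-n/2}|\bfSigma|^{-1/2}\e^{-\frac12 \bfy^\tr \bfSigma^{-1}\bfy}$. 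Hence
\[
\Laplace_i(\theta) \;=\; \e^{-\frac12 (\bfx^*)^\tr \bfSigma^{-1}\bfx^*}\,\Exp\bigl[v(\bfSigma^{\frac12}Z)\bigr],
\]
where the identification of the Gaussian integral with $\Exp[v(\bfSigma^{1/2}Z)]$, $Z\sim\Norm(\bfzero,\bfI)$, follows from a second routine linear change of variables.

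To finish I would match this with the asserted form. Evaluating $v$ at the origin kills the linear term in $\bfz$ and gives $v(\bfzero) = \exp\{i\ln(\bfone^\tr \ve^{\bfmu+\bfx^*}) - \theta\,\bfone^\tr \ve^{\bfmu+\bfx^*}\}$; comparing with the definition of $h_{\theta,i}$ yields the clean identity
\[
\e^{-h_{\theta,i}(\bfx^*)} \;=\; v(\bfzero)\,\e^{-\frac12 (\bfx^*)^\tr \bfSigma^{-1}\bfx^*}.
\]
Substituting this into the expression for $\Laplace_i(\theta)$ above and dividing by $\widetilde{\Laplace}_i(\theta) = \e^{-h_{\theta,i}(\bfx^*)}/\sqrt{|\bfSigma\bfH|}$ from \eqref{firstTerm} produces precisely $I_i(\theta)=\sqrt{|\bfSigma\bfH|}\,v(\bfzero)^{-1}\Exp[v(\bfSigma^{\frac12}Z)]$.

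There is no serious obstacle here: the result is an algebraic identity once the Gaussian density is rearranged around $\bfx^*$. The only point requiring modest care is bookkeeping of the prefactors $|\bfSigma|$, $|\bfH|$ and the normalisation $(2\pi)^{n/2}$, and the verification that $v(\bfzero)$ absorbs exactly the non-quadratic part of $\e^{-h_{\theta,i}(\bfx^*)}$; this is where the apparently ad hoc choice of the linear term $-(\bfx^*)^\tr\bfSigma^{-1}\bfz$ inside $v$ shows its purpose, namely to cancel the cross term produced by the translation $\bfx = \bfx^*+\bfy$.
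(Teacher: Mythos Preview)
Your proof is correct. It differs from the paper's in the choice of substitution: the paper first sets $\bfx = \bfx^* + \bfH^{-1/2}\bfy$ (translation plus Hessian rescaling, in keeping with the Laplace-method setup that motivated $\widetilde{\Laplace}_i$), extracts the factor $\widetilde{\Laplace}_i(\theta)$, and then applies a second change $\bfy = (\bfSigma\bfH)^{1/2}\bfz$ to reach the stated expectation. You instead perform only the translation $\bfx = \bfx^* + \bfy$, recognise the resulting integrand as $v(\bfy)$ times the $\Norm(\bfzero,\bfSigma)$ density, and bring $\bfH$ in only at the end via the definition of $\widetilde{\Laplace}_i(\theta)$. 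Your route is shorter and avoids handling the matrix square roots $\bfH^{-1/2}$ and $(\bfSigma\bfH)^{1/2}$ (and any attendant worries about non-commutativity of $\bfSigma$ and $\bfH$); the paper's route, by contrast, makes the link to the Laplace approximation in \cite{La15} more visible.
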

\begin{proof}
We begin by substituting $\bfx = \bfx^* + \bfH^{-\frac12} \bfy$ into $\Laplace_i(\theta)$, then multiply by $\exp\{ \pm \text{ some constants}\,\}$:
\begin{align*}
	\Laplace_i(\theta) &= \int_{\RL^n} \frac{ (2\pi)^{-\frac{n}{2}} }{\sqrt{|\bfSigma|}} \exp\{ i \log(\bfone^\tr \ve^{\bfmu + \bfx}) - \theta \bfone^\tr \ve^{\bfmu + \bfx} - \frac12 \bfx^\tr \bfSigma^{-1} \bfx \}  \dif \bfx \\
	&= \int_{\RL^n} \frac{ (2\pi)^{-\frac{n}{2}} }{\sqrt{|\bfSigma \bfH|}} \exp\{ i \log(\bfone^\tr \ve^{\bfmu + \bfx^* + \bfH^{-\frac12} \bfy}) - \theta \bfone^\tr \ve^{\bfmu + \bfx^* + \bfH^{-\frac12} \bfy} \\
	&\qquad - \frac12 (\bfx^* + \bfH^{-\frac12} \bfy)^\tr \bfSigma^{-1} (\bfx^* + \bfH^{-\frac12} \bfy) \}  \dif \bfy \\
	&= \widetilde{\Laplace}_i(\theta) \exp\{ -i \log(\bfone^\tr \ve^{\bfmu + \bfx^*}) + \theta \bfone^\tr \ve^{\bfmu + \bfx^*} \} \\
	&\qquad \times \int_{\RL^n} (2\pi)^{-\frac{n}{2}} \exp\{ i \log(\bfone^\tr \ve^{\bfmu + \bfx^* + \bfH^{-\frac12} \bfy}) - \theta \bfone^\tr \ve^{\bfmu + \bfx^* + \bfH^{-\frac12} \bfy} \\
	&\qquad\qquad -(\bfx^*)^\tr \bfSigma^{-1} \bfH^{-\frac12} \bfy - \frac12 \bfy^\tr (\bfSigma \bfH)^{-1} \bfy \}  \dif \bfy \,.
\end{align*}
That is, $\Laplace_i(\theta) = \widetilde{\Laplace}_i(\theta) I_i(\theta)$. In $I_i(\theta)$, take the change of variable $\bfy = (\bfSigma\bfH)^{\frac12} \bfz$, and the result follows.
\end{proof}

\begin{remark}
The form of $I_i(\theta)$ naturally suggests evaluation using \emph{Gauss--Hermite} quadrature:
\begin{equation} \label{eq:gauss-hermite-quad}
	\widehat{\Laplace}_i(\theta) = \frac{\exp\{-h_{\theta,i}(\bfx^*)\}}{v(\bfzero) \,\pi^{\,n/2}} \sum_{i_1=1}^H \cdots \sum_{i_n=1}^H v(\bfSigma^{\frac12} \bfz)  \prod_{j=1}^n w_{i_j}
\end{equation}
where $\bfz = (z_{i_1}, \dots, z_{i_n})^\top$, the set of weights and nodes $\{(w_i, z_i) : 1 \le i \le H\}$ is specified by the Gauss--Hermite quadrature algorithm, and $H \ge 1$ is the order of the approximation. This approximation is accurate, especially so when the $i$ in $\Laplace_i$ becomes large. Even for $\Laplace$ ($= \Laplace_0$) this method appears to outperform the quasi-Monte Carlo scheme outlined in \cite{La15}. $\hfill \Diamond$
\end{remark}

Thus, with $\widehat{\Laplace}_i(\theta)$ given in \eqref{eq:gauss-hermite-quad}, we can now estimate the coefficients. The three methods correspond to
\begin{enumerate}
\item $\widehat{a}_k = R^{-1} \sum_{r=1}^R Q_k(S_r)$, for $S_1$, \dots, $S_R \iidDist f_\theta(x)$,
\item $\widehat{a}_k = \sum_{j=0}^k q_{kj} \, \widehat{\Exp[S_\theta^j]} = q_{k0} + (R \, \widehat{\Laplace}(\theta))^{-1} \sum_{j=1}^k q_{kj} \sum_{r=1}^R S_r^j \e^{-\theta S_r}$, from \eqref{eq:expand_coeffs}, where $S_1$, \dots, $S_R \iidDist f(x)$,
\item $\widehat{a}_k = q_{k0} + \widehat{\Laplace}(\theta)^{-1} \sum_{j=1}^k q_{kj} \, \widehat{\Laplace}_j(\theta)$.
\end{enumerate}
In the numerical illustrations, we switched between using methods (2) and (3) for large and small $n$ respectively. Algorithms for efficient simulation from $f_\theta$ is work in progress.


\end{appendix}

\bibliography{main}
\bibliographystyle{ws-book-van}

\end{document}